\newtheorem{theorem}{Theorem}
\newtheorem{lemma}[theorem]{Lemma}
\newtheorem{corollary}[theorem]{Corollary}
\theoremstyle{definition}
\newtheorem{example}{Example}
\newtheorem{remark}{Remark}
\newenvironment{proof*}[1][\proofname]{\par
  \pushQED{\qed}%
  \normalfont \topsep6\p@\@plus6\p@\relax
  \trivlist
  \item[\hskip\labelsep
        \itshape
    #1\@addpunct{.}]\ignorespaces
}{%
  \endtrivlist\@endpefalse
}
\DeclareMathOperator{\Ext}{Ext}
\DeclareMathOperator{\Tor}{Tor}
\renewcommand*{\Im}{\mathop{\mathrm{Im}}}
\newcommand*{\Ptens}{\mathop{\widehat\otimes}}
\newcommand*{\ptens}[1]{\mathop{\widehat\otimes}_{#1}}
\newcommand*{\tens}[1]{\mathop{\otimes}_{#1}}
\newcommand*{\lar}{\leftarrow}
\newcommand*{\xla}{\xleftarrow}
\newcommand{\lriso}{\stackrel{\textstyle\sim}{\smash\longrightarrow
\vphantom{\scriptscriptstyle{_1}}}}
\newcommand*{\db}{\mathop{\mathrm{db}}}
\newcommand*{\wdb}{\mathop{\mathrm{w.db}}}
\newcommand*{\dg}{\mathop{\mathrm{dg}}}
\newcommand*{\wdg}{\mathop{\mathrm{w.dg}}}
\renewcommand*{\dh}{\mathop{\mathrm{dh}}}
\newcommand*{\wdh}{\mathop{\mathrm{w.dh}}}
\newcommand*{\lmod}{\mbox{-}\!\mathop{\mathsf{mod}}}
\newcommand*{\rmod}{\mathop{\mathsf{mod}}\!\mbox{-}}
\newcommand*{\bimod}{\mbox{-}\!\mathop{\mathsf{mod}}\!\mbox{-}}
\newcommand*{\id}{\mathbf{1}}
\newcommand*{\CC}{\mathbb C}
\newcommand*{\DD}{\mathbb D}
\newcommand*{\R}{\mathbb R}
\newcommand*{\N}{\mathbb N}
\newcommand*{\Z}{\mathbb Z}
\newcommand*{\h}{\mathbf h}
\newcommand*{\cH}{\mathscr H}
\newcommand*{\eps}{\varepsilon}
\newcommand*{\Fr}{\mathsf{Fr}}
\newcommand*{\bfU}{\textbf{U}}
\newcommand*{\bfN}{\textbf{N}}
\newcommand*{\bfB}{\textbf{B}}
\newcommand*{\bfM}{\textbf{M}}
{\end{compactenum}}
\begin{document}
\title{Homological dimensions of K\"othe algebras}
\author{A. Yu. Pirkovskii}
\address{Department of Nonlinear Analysis and Optimization\\
Faculty of Science\\
Peoples' Friendship University of Russia\\
Mikluho-Maklaya 6\\
117198 Moscow\\
Russia}
\email{pirkosha@sci.pfu.edu.ru, pirkosha@online.ru}
\thanks{Partially supported by the RFBR grant 08-01-00867.}
\subjclass[2000]{Primary 46M18, 46H25, 16E10; Secondary 46A45, 16D40, 18G50.}
\keywords{K\"othe algebra, global dimension, weak global dimension, bidimension,
weak bidimension}
\date{}

\begin{abstract}
Given a metrizable K\"othe algebra $\lambda(P)$, we compute the global dimension,
the weak global dimension, the bidimension, and the weak bidimension of $\lambda(P)$
in terms of the K\"othe set $P$.
\end{abstract}

\maketitle

K\"othe sequence spaces play a significant r\^ole in modern functional analysis.
On the one hand, the class of K\"othe spaces is rather large and contains many
important spaces of smooth functions and distributions \cite{Vogt_seq_rep}.
On the other hand, K\"othe spaces are often used to provide various examples
and counterexamples in the theory of topological vector spaces.

As was observed in \cite{Bh_Deh}, many K\"othe spaces can be viewed as topological
algebras under pointwise multiplication. The study of homological properties of K\"othe
algebras was initiated by the author in \cite{Pir_bipr,Pir_bipr2,Pir_msb}.
In particular, necessary and sufficient conditions for a K\"othe algebra to be biprojective
(or, equivalently, biflat) were obtained, and homological dimensions of biprojective
K\"othe algebras were computed in some special cases.

In this note we complete the study of homological dimensions of metrizable
K\"othe algebras by computing the global dimension, the weak global dimension, the bidimension,
and the weak bidimension of a K\"othe algebra $\lambda(P)$ in terms
of the K\"othe set $P$. In fact, a great deal of this work was done in
\cite{Pir_bipr,Pir_bipr2,Pir_msb}. However, the following two questions were left unanswered:
(1) Does there exist a biprojective K\"othe algebra $\lambda(P)$ whose bidimension
equals $2$, while the global dimension equals $1$?; and (2) What can be said about homological
dimensions of nonbiprojective K\"othe algebras? It is the aim of this paper to
answer the above questions.

\section{Preliminaries}

Throughout, all vector spaces and algebras are assumed to be over the field $\CC$
of complex numbers. All algebras are assumed to be associative, but
not necessarily unital. The unitization of an algebra $A$ is denoted by $A_+$.

By a {\em $\Ptens$-algebra} we mean an algebra $A$ endowed with
a complete locally convex topology in such a way that the product map
$A\times A\to A$ is jointly continuous.
Note that the above map uniquely extends to a continuous linear map
${A\Ptens A\to A},\; a\otimes b\mapsto ab$,
where the symbol $\Ptens$ stands for the completed projective
tensor product (whence the name ``$\Ptens$-algebra'').
If the topology on $A$ can be determined by a family of submultiplicative
seminorms (i.e., a family $\{\|\cdot\|_\lambda : \lambda\in\Lambda\}$ of
seminorms such that $\| ab\|_\lambda\le\| a\|_\lambda \| b\|_\lambda$
for all $a,b\in A$), then $A$ is said to be {\em locally $m$-convex}
(or an {\em Arens-Michael algebra}).
A {\em Fr\'echet algebra} is a $\Ptens$-algebra $A$ whose underlying locally convex
space is a Fr\'echet space (unlike some authors, we do not assume $A$ to be
locally $m$-convex).

Let $I$ be any set, and let $P$ be a set of nonnegative real-valued functions on $I$.
For $p\in P$, we shall write $p_i$ for $p(i)$.
Recall that $P$ is a {\em K\"othe set} on $I$ if the following axioms are
satisfied:
\begin{align*}
\tag*{(P1)}
&\forall\,i\in I\quad\exists\, p\in P:\quad p_i>0\, ;\\
\tag*{(P2)}
&\forall\, p,q\in I\quad\exists\, r\in P:\quad\max\{ p_i,q_i\}\le r_i\quad\forall\, i\in I\, .
\end{align*}
Given a K\"othe set $P$, the {\em K\"othe space}
$\lambda(P)$ is defined as follows:
\begin{equation*}
\lambda(P)=
\Bigl\{ x=(x_i)\in \CC^I :
\| x\|_p=\sum_i |x_i|p_i <\infty\quad\forall\, p\in P\Bigr\}\, .
\end{equation*}
This is a complete locally convex space
with the topology determined by
the family of seminorms $\{\|\cdot\|_p : p\in P\}$. Clearly, $\lambda(P)$
is a Fr\'echet space if and only if $P$ contains an at most countable
cofinal subset.

For each $i\in I$ denote by $e_i$ the function on $I$ which is $1$ at $i$,
$0$ elsewhere. Obviously, $x=\sum_i x_i e_i$ for each $x\in\lambda(P)$.

Given a K\"othe set $P$ on $I$, each $n$-tuple
$(p^1,\ldots ,p^n)\in P^n$ determines a function on $I^n$ by
$(i_1,\ldots ,i_n)\mapsto p^1_{i_1}\cdots p^n_{i_n}$.
The set of all such functions will be denoted by $P^{\times n}$.
Clearly, $P^{\times n}$ is a K\"othe set on $I^n$.
By \cite{Pietsch2}, there exists a topological isomorphism
\begin{equation}
\label{Kothe_ptens}
\begin{split}
\lambda(P)^{\Ptens n}=
\underbrace{\lambda(P)\Ptens\cdots\Ptens\lambda(P)}_n
&\lriso \lambda(P^{\times n});\\
\qquad e_{i_1}\otimes\cdots\otimes e_{i_n} &\mapsto
e_{i_1\ldots i_n}.
\end{split}
\end{equation}
It is easy to see that the topology on $\lambda(P^{\times n})$ is determined
by the family of seminorms $\{\|\cdot\|_{(p,\ldots ,p)} : p\in P\}$.
We will denote the above seminorm simply by $\|\cdot\|_p$; this should not
cause any confusion.

If $P,Q$ are K\"othe sets, we write $P\prec Q$ if for each
$p\in P$ there exist $q\in Q$ and $C>0$ such that $p_i\le Cq_i$
for all $i\in I$. This is equivalent to say that $\lambda(Q)\subset\lambda(P)$,
and the embedding of $\lambda(Q)$ into $\lambda(P)$ is continuous.
If $P\prec Q$ and $Q\prec P$ (i.e., if $\lambda(P)=\lambda(Q)$ topologically),
we write $P\sim Q$.
We set
\begin{align*}
P\cdot Q&=\{ pq=(p_i q_i)_{i\in I} : p\in P,\; q\in Q\},\\
P^{[\alpha]}&=\{ p^\alpha=(p_i^\alpha)_{i\in I} : p\in P\}\quad (\alpha>0).
\end{align*}
Note that $P^{[2]}\sim P\cdot P$ by $(P2)$.

It is easy to see that $P\prec P^{[2]}$ if and only if for each
$a,b\in\lambda(P)$ the pointwise product $ab$ belongs to $\lambda(P)$,
and for each $p\in P$ there exist $q\in P$ and $C>0$ such that
$\| ab\|_p\le C\| a\|_q \| b\|_q$ for every $a,b\in\lambda(P)$.
Hence the above condition implies that
$\lambda(P)$ is a $\Ptens$-algebra under pointwise multiplication.
Note that this condition is satisfied automatically whenever $p_i\ge 1$
for each $p\in P$ and each $i\in I$; moreover, in this case
$\lambda(P)$ is locally $m$-convex.
Algebras of the form $\lambda(P)$ (where $P$ is any K\"othe set
satisfying $P\prec P^{[2]}$) are called {\em K\"othe algebras}.

\begin{example}
\label{example:l^1}
The Banach algebra $\ell^1(I)$ is clearly a K\"othe algebra.
\end{example}

\begin{example}
\label{example:C^I}
The algebra $\CC^I$ endowed with the direct product topology
is a K\"othe algebra.
To see this, it suffices to set $P$ to be the family of all nonnegative
functions with finite support. It is also clear that $\CC^I$ is locally
$m$-convex.
\end{example}

\begin{example}
\label{example:Lambda}
Fix a real number $0<R\le\infty$ and a nondecreasing sequence
$\alpha=(\alpha_n)_{n\in\N}$ of positive numbers with $\lim_n\alpha_n=\infty$.
The {\em power series space} $\Lambda_R(\alpha)$ is the set of all
complex sequences $x=(x_n)_{n\in\N}$ such that
$\| x\|_r=\sum_n |x_n|r^{\alpha_n} <\infty$ for all $0<r<R$.
Evidently, $\Lambda_R(\alpha)$ is a metrizable K\"othe space.
If $R\ge 1$, then $\Lambda_R(\alpha)$ satisfies condition $P\prec P^{[2]}$ and is
therefore a K\"othe algebra. Moreover, since the seminorms
$\|\cdot\|_r$ are submultiplicative for $r\ge 1$, we see that $\Lambda_R(\alpha)$
is locally $m$-convex provided that $R>1$.
\end{example}

Here are two special cases of Example \ref{example:Lambda}.

\begin{example}
\label{example:s}
If $\alpha_n=\log n$, then $\Lambda_\infty(\alpha)$ is topologically isomorphic
to the space of {\em rapidly decreasing sequences}
\[
s=\Bigl\{ x=(x_n)\in\CC^\N :
\| x\|_k=\sum_n |x_n| n^k <\infty\quad\forall\, k\in \N\Bigr\}\, .
\]
\end{example}

\begin{example}
\label{example:H(D)}
If $\alpha_n=n$, then $\Lambda_R(\alpha)$ is topologically isomorphic to the space
of functions holomorphic on the disc
$\mathbb D_R=\{ z\in\CC : |z| < R\}$ of radius $R$.
Under this identification, the multiplication in $\Lambda_R(\alpha)$
corresponds to the ``componentwise'' product of the Taylor expansions
of holomorphic functions (the {\em Hadamard product}; see \cite{Render}).
The resulting topological algebra is denoted by $\cH(\mathbb D_R)$.
\end{example}

We now recall some basic facts from the homology theory of $\Ptens$-algebras.
For details, see \cite{X1,X_HOA,T1}. Some details on weak homological dimensions
can also be found in \cite{Sel_bifl,Pir_msb}.

Let $A$ be a $\Ptens$-algebra. By a {\em left $A$-$\Ptens$-module} we mean a
left $A$-module endowed with a complete locally convex topology in such a way that
the action $A\times X\to X$ is jointly continuous. If $X$ and $Y$ are left
$A$-$\Ptens$-modules, then the space of all continuous $A$-module morphisms
from $X$ to $Y$ is denoted by ${_A}\h(X,Y)$. Right $A$-$\Ptens$-modules
and $A$-$\Ptens$-bimodules are defined similarly.
The category of left $A$-$\Ptens$-modules (respectively, right $A$-$\Ptens$-modules,
$A$-$\Ptens$-bimodules) and continuous $A$-module morphisms
will be denoted by $A\lmod$ (respectively, $\rmod A$, $A\bimod A$).

If $X$ is a right $A$-$\Ptens$-module and $Y$
is a left $A$-$\Ptens$-module, then their $A$-module tensor product
$X\ptens{A}Y$ is defined to be
the completion of the quotient $(X\Ptens Y)/N$, where $N\subset X\Ptens Y$
is the closed linear span of all elements of the form
$x\cdot a\otimes y-x\otimes a\cdot y$
($x\in X$, $y\in Y$, $a\in A$).
As in pure algebra, the $A$-module tensor product can be characterized
by a universal property (see \cite{X1} for details).

A chain complex $C_\bullet=(C_n,d_n)_{n\in\Z}$ in $A\lmod$ is {\em admissible} if
it splits in the category of topological vector spaces.
A left $A$-$\Ptens$-module $P$ is {\em projective} if the functor
${_A}\h(P,-)$ is {\em exact} in the sense that for every admissible chain
complex $C_\bullet$ in $A\lmod$ the complex ${_A}\h(P,C_\bullet)$ of vector
spaces is exact. Projective right $A$-$\Ptens$-modules and projective
$A$-$\Ptens$-bimodules are defined similarly.
A $\Ptens$-algebra $A$ is {\em biprojective} if $A$ is projective
in $A\bimod A$. A {\em resolution} of $X\in A\lmod$ is a chain complex
$P_\bullet=(P_n,d_n)_{n\ge 0}$ together with a morphism $\eps\colon P_0\to X$
such that $0\lar X\xla{\eps} P_\bullet$ is an admissible complex.
If $P_n$ is projective for each $n\ge 0$, then $(P_\bullet,\eps)$ is a
{\em projective resolution}. It is known that the category $A\lmod$ has
{\em enough projectives}, i.e., every $X\in A\lmod$ has a projective resolution.
The {\em homological dimension} of $X\in A\lmod$ is the minimum integer $n=\dh_A X$
with the property that $X$ has a projective resolution $(P_\bullet,\eps)$
with $P_i=0$ for all $i>n$. If no such $n$ exists, we set $\dh_A X=\infty$.
The {\em global dimension} of $A$ is defined by
\[
\dg A=\sup\{ \dh\nolimits_A X : X\in A\lmod\}.
\]
The {\em bidimension} of $A$ is defined to be the homological dimension of $A_+$
in $A\bimod A$. We always have $\dg A\le\db A$. Algebras $A$ with $\db A=0$ are
called {\em contractible}. Equivalently, $A$ is contractible if and only if
$A$ is biprojective and unital.

Now let $A$ be a Fr\'echet algebra, and let $A\lmod(\Fr)$ denote the full subcategory
of $A\lmod$ consisting of left Fr\'echet $A$-modules. The categories of right
Fr\'echet $A$-modules and of Fr\'echet $A$-bimodules will be denoted by
$\rmod A(\Fr)$ and $A\bimod A(\Fr)$, respectively.
By using \cite[Theorem III.1.27]{X1}, it is easy to see that a left Fr\'echet
$A$-module $P$ is projective if and only if $P$ is projective in $A\lmod(\Fr)$
(in the sense that the functor ${_A}\h(P,-)$ is exact on $A\lmod(\Fr)$).
Together with \cite[Theorem III.5.4]{X1}, this implies that for each $X\in A\lmod(\Fr)$
the homological dimension $\dh_A X$ does not depend on whether we compute it in $A\lmod$
or in $A\lmod(\Fr)$. The same is true of $\db A$ (as for $\dg A$, we do not know
the answer; see also Remark~\ref{rem:dg_Fr_Ptens} below).

A left Fr\'echet $A$-module $F$ is {\em flat} if the functor
$(-)\ptens{A} F$ is exact on $\rmod A(\Fr)$, i.e., if for every admissible chain
complex $C_\bullet$ in $\rmod A(\Fr)$ the complex $C_\bullet\ptens{A} F$ of vector
spaces is exact. A Fr\'echet algebra $A$ is {\em biflat} if $A$ is flat
in $A\bimod A(\Fr)$. A resolution $(P_\bullet,\eps)$ of $X$ in $A\lmod(\Fr)$ is a {\em flat
resolution} if $P_n$ is flat for each $n\ge 0$.
The {\em weak homological dimension} of $X\in A\lmod(\Fr)$
is the minimum integer $n=\wdh_A X$
with the property that $X$ has a flat resolution $(P_\bullet,\eps)$
with $P_i=0$ for all $i>n$. If no such $n$ exists, we set $\wdh_A X=\infty$.
The {\em weak global dimension} of $A$ is defined by
\[
\wdg A=\sup\{ \wdh\nolimits_A X : X\in A\lmod(\Fr)\}.
\]
The {\em weak bidimension} of $A$ is defined to be the weak homological dimension of $A_+$
in $A\bimod A(\Fr)$. We always have $\wdg A\le\wdb A$. Algebras $A$ with $\wdb A=0$ are
called {\em amenable}.

Since each projective Fr\'echet $A$-module is flat, we have
$\wdh_A X\le\dh_A X$ for every $X\in A\lmod(\Fr)$. Consequently,
$\wdg A\le\dg A$ and $\wdb A\le\db A$ for each Fr\'echet algebra $A$.

Suppose that $X$ is a right Fr\'echet $A$-module and $Y$ is a left
Fr\'echet $A$-module. The space $\Tor_n^A(X,Y)$ is defined to be the
$n$th homology of the complex $X\ptens{A} Q_\bullet$, where $Q_\bullet$
is a flat resolution of $Y$ in $A\lmod(\Fr)$. Equivalently,
$\Tor_n^A(X,Y)$ is the
$n$th homology of the complex $P_\bullet\ptens{A} Y$, where $P_\bullet$
is a flat resolution of $X$ in $\rmod A(\Fr)$.
The above definitions do not depend on the particular choice of $P_\bullet$
and $Q_\bullet$. We have $\wdh_A Y\le n$ if and only if for each $X\in \rmod A(\Fr)$
$\Tor_{n+1}^A(X,Y)=0$ and $\Tor_n^A(X,Y)$ is Hausdorff.
There is a similar characterization of $\dh_A Y$ in terms of $\Ext$ spaces,
but we will not use it in the sequel.

\begin{remark}
When dealing with flat modules and weak dimensions, we deliberately restrict
ourselves to Fr\'echet modules and Fr\'echet algebras. For an explanation,
see \cite[Remark 2.5]{Pir_msb} and \cite[Remark 7.2]{Pir_cycflat2}.
\end{remark}

\section{Biprojective K\"othe algebras}

Let $\lambda(P)$ be a K\"othe algebra. Throughout we will use the following
conditions (\bfU), (\bfN), (\bfB), and (\bfM) on the K\"othe set $P$
(``\bfU'' is for ``unital'', ``\bfN'' is for ``nuclear'', ``\bfB''
is for ``biprojective''
or ``biflat'', and ``\bfM'' is for ``matrices''):

\pltopsep=8pt
\plitemsep=4pt
\begin{compactenum}
\item[(\bfU)] $\forall p\in P\; \sum_i p_i<\infty$.
\item[(\bfN)] $\forall p\in P\; \exists\, q\in P\;\exists\,\alpha\in\ell^1\; :\; p\le\alpha q$.
\item[(\bfB)] $P\sim P^{[2]}$.
\item[(\bfM)] There exist complex matrices $\alpha=(\alpha_{ij})_{i,j\in I}$
and $\beta=(\beta_{ij})_{i,j\in I}$ such that
\plitemsep=3pt
\begin{compactenum}
\item[(\bfM1)]
$\alpha_{ij}+\beta_{ij}=1\quad (i,j\in I)$;
\item[(\bfM2)]
$\forall p\in P\quad\exists\, C>0\quad\exists\, q\in P\quad\forall j\in\N\quad
\sup_i |\alpha_{ij}|p_i p_j\le Cq_j$;
\item[(\bfM3)]
$\forall p\in P\quad\exists\, C>0\quad\exists\, q\in P\quad\forall i\in\N\quad
\sup_j |\beta_{ij}|p_j p_i\le Cq_i$.
\end{compactenum}
\end{compactenum}

Clearly, condition (\bfU) means exactly that $\lambda(P)$ is unital.
By the Grothendieck-Pietsch criterion, (\bfN) is equivalent to
$\lambda(P)$ being nuclear. Condition (\bfB) holds if and only if $\lambda(P)$
is biprojective \cite[Theorem 3.5]{Pir_bipr}. Moreover, if $\lambda(P)$ is metrizable, then
(\bfB) is equivalent to $\lambda(P)$ being biflat \cite[Theorem 5.2]{Pir_msb}.
Finally, if (\bfB) holds, then (\bfN)\&(\bfM) $\iff\db\lambda(P)\le 1$
\cite[Corollary 4.4 and Theorem 4.7]{Pir_bipr}.
By \cite[Proposition 6.11]{Pir_msb}, (\bfU) is also equivalent to
$\lambda(P)$ being contractible.
Therefore (\bfU) implies (\bfB), (\bfN), and (\bfM).

Let $P$ be a K\"othe set on $I$. For each $p\in P$ we define a function
$\bar p\colon I\to\R_+$
by $\bar p_i=\min\{ p_i,1\}$. Clearly, $\bar P=\{ \bar p : p\in P\}$ is a K\"othe set.

\begin{lemma}
\label{lemma:Pbar}
If $P\prec P^{[2]}$, then $\bar P\prec P\cdot\bar P$.
\end{lemma}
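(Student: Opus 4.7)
The plan is to unwind the definitions and produce a single candidate element of $P\cdot\bar P$ that dominates a given $\bar p$. Recall $P\prec P^{[2]}$ means that for every $p\in P$ there exist $q\in P$ and $C>0$ with $p_i\le C q_i^2$ for all $i\in I$. Given $p\in P$, my goal is to find $q,r\in P$ and $C'>0$ such that $\bar p_i \le C' q_i \bar r_i$ for every $i$, since this is exactly the statement $\bar P\prec P\cdot\bar P$.

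The natural guess is $r=q$, where $q$ is the witness of $P\prec P^{[2]}$ for $p$. I would then split the argument into two cases according to the size of $q_i$. If $q_i\le 1$, then $\bar q_i = q_i$, so $q_i\bar q_i = q_i^2 \ge p_i/C \ge \bar p_i/C$. If $q_i\ge 1$, then $\bar q_i = 1$, so $q_i\bar q_i = q_i\ge 1\ge \bar p_i$. Taking $C'=\max\{C,1\}$ handles both cases uniformly and yields $\bar p_i \le C' q_i\bar q_i$ for all $i\in I$. Since $q\bar q\in P\cdot\bar P$, this gives the required estimate.

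There is essentially no obstacle here; the only thing to watch is that the case split is driven by $q_i$ (the witness), not by $p_i$, so that the square in $p_i\le Cq_i^2$ supplies one factor of $q_i$ and one factor of $\bar q_i$ simultaneously. The whole proof should fit in a few lines.
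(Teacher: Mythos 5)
Your proof is correct and follows essentially the same route as the paper: take the witness $q$ of $p\le Cq^2$ and split on whether $q_i\le 1$ or $q_i\ge 1$, obtaining $\bar p\le C'q\bar q$ with $C'=\max\{C,1\}$. The only (harmless) difference is that the paper additionally arranges $p\le q$ via (P2) to run the first case through $\bar p_i=p_i$, whereas you bypass this by using $\bar p_i\le p_i$ directly.
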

\begin{proof}
Given $p\in P$, choose $q\in P$ and $C\ge 1$ such that $p\le Cq^2$ and $p\le q$.
Fix any $i\in I$. If $q_i<1$, then $p_i<1$, so that $\bar p_i=p_i$, $\bar q_i=q_i$, whence
$\bar p_i\le Cq_i\bar q_i$. If $q_i\ge 1$, then $\bar q_i=1$, and
$\bar p_i\le 1\le Cq_i=Cq_i\bar q_i$. Therefore $\bar p\le Cq\bar q$, which proves
the claim.
\end{proof}

\begin{corollary}
Let $\lambda(P)$ be a K\"othe algebra. Then for each $a\in\lambda(P)$
and each $x\in\lambda(\bar P)$ the pointwise product $a\cdot x$ is in $\lambda(\bar P)$.
Moreover, for each $\bar p\in \bar P$ there exist $q\in P$ and $C>0$ such that
$\| a\cdot x\|_{\bar p}\le C\| a\|_q \| x\|_{\bar q}$ for every $a\in\lambda(P)$,
$x\in\lambda(\bar P)$. Therefore $\lambda(\bar P)$
is a $\lambda(P)$-$\Ptens$-module under pointwise multiplication.
\end{corollary}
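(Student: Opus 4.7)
The plan is to deduce the corollary directly from Lemma~\ref{lemma:Pbar} by a short computation with the $\ell^1$-type seminorms defining $\lambda(P)$ and $\lambda(\bar P)$, and then to observe that the module axioms are automatic because the action is pointwise multiplication.

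First, I would fix $\bar p\in\bar P$ and invoke Lemma~\ref{lemma:Pbar} to obtain $q\in P$ and $C>0$ with $\bar p_i\le Cq_i\bar q_i$ for every $i\in I$ (the proof of the lemma produces the same index $q$ on both factors, which is exactly what makes the estimate below clean). For arbitrary $a\in\lambda(P)$ and $x\in\lambda(\bar P)$ I would then estimate
\[
\| a\cdot x\|_{\bar p}
=\sum_i |a_i||x_i|\bar p_i
\le C\sum_i |a_i|q_i\,|x_i|\bar q_i
\le C\bigl(\sup_i |x_i|\bar q_i\bigr)\sum_i |a_i|q_i,
\]
and finish by using $\sup_i |x_i|\bar q_i\le\sum_i|x_i|\bar q_i=\| x\|_{\bar q}$, which gives the desired inequality $\| a\cdot x\|_{\bar p}\le C\| a\|_q\| x\|_{\bar q}$. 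In particular $a\cdot x\in\lambda(\bar P)$.

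Finally, I would note that the action $\lambda(P)\times\lambda(\bar P)\to\lambda(\bar P),\ (a,x)\mapsto a\cdot x$ is associative and distributive because it is defined coordinatewise, and it is jointly continuous by the seminorm estimate just obtained. Since $\lambda(\bar P)$ is complete in its own right (every $\lambda(Q)$ is), it is a left $\lambda(P)$-$\Ptens$-module; the right action is of course symmetric.

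I do not anticipate a real obstacle: the content of the corollary is entirely concentrated in Lemma~\ref{lemma:Pbar}, and the only mildly subtle point is the switch from $\sup_i$ to $\sum_i$ when passing from the intermediate product sum back to the $\ell^1$-seminorm $\|\cdot\|_{\bar q}$ of $\lambda(\bar P)$.
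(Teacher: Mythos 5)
Your proof is correct and is exactly the argument the paper intends: the corollary is stated without proof as an immediate consequence of Lemma~\ref{lemma:Pbar}, and your seminorm estimate (using $\bar p\le Cq\bar q$ and the bound of a $\sup$ by the corresponding $\ell^1$-sum) is the standard way to fill it in.
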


We now state the main result of this section.

\begin{theorem}
\label{thm:dhdb_bipr}
Let $A=\lambda(P)$ be a K\"othe algebra satisfying {\upshape (\bfB)}
and {\upshape (\bfN)}. Suppose that $\dh_A\lambda(\bar P)\le 1$. Then $\db A\le 1$.
\end{theorem}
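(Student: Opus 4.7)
The plan is to reduce the bidimension question to a homological question about the trivial $A$-bimodule $\CC$, and then to build a length-one projective $A$-bimodule resolution of $\CC$ out of the given length-one projective resolution of $\lambda(\bar P)$ in $A\lmod$.

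For the first step, note that (\bfB) makes $A$ biprojective, so $\dh_{A\bimod A}A=0$. The short exact sequence of $A$-bimodules
\[
0\lra A\lra A_+\lra \CC\lra 0
\]
is admissible (because $A_+=A\oplus\CC$ splits as a topological vector space, with $\CC$ carrying the trivial bimodule structure), and the long-exact-sequence estimate gives $\db A = \dh_{A\bimod A}A_+\le \dh_{A\bimod A}\CC$. Thus it suffices to prove $\dh_{A\bimod A}\CC\le 1$.

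For the second step, I would construct an admissible short exact sequence $0\lra K\lra Q\lra\CC\lra 0$ in $A\bimod A$ in which $Q$ is a projective $A$-bimodule of the form $A_+\Ptens V\Ptens A_+$ for a suitable Fr\'echet space $V$, and the kernel $K$ is controlled by $\lambda(\bar P)$ via the biprojective splitting $\rho\colon A\to A\Ptens A$, $e_i\mapsto e_i\otimes e_i$. Concretely, one extends a projective presentation $0\to P_1\to P_0\to\lambda(\bar P)\to 0$ in $A\lmod$ (supplied by the hypothesis) across the completed projective tensor product and twists by $\rho$ to obtain a bimodule presentation of $\CC$; under (\bfN) the relevant $\Ptens$-products are identified with K\"othe spaces on product index sets (via the isomorphism \eqref{Kothe_ptens}), making both the bimodule structures and their projectivity manifest. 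The assumption $\dh_A\lambda(\bar P)\le 1$ then promotes the one-sided projectivities of $P_0$ and $P_1$ to $A$-bimodule projectivities of $Q$ and $K$, delivering the desired length-one bimodule resolution of $\CC$.

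The main technical obstacle will be setting up the right bimodule presentation and identifying $K$ cleanly in terms of $\lambda(\bar P)$. Morally, $\lambda(\bar P)$ plays the role of a ``truncated augmentation'': the replacement $\bar p=\min(p,1)$ is precisely what is needed so that the biprojective splitting absorbs the on-diagonal part of $A\Ptens A$ while $\lambda(\bar P)$ captures the remaining off-diagonal contribution to the bar construction. Making this precise in the nuclear Fr\'echet setting, and checking the admissibility of the resulting sequences of $A$-bimodules, is where the technical work concentrates.
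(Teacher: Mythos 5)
Your opening reduction is fine: since (\bfB) makes $A$ biprojective, the admissible sequence $0\to A\to A_+\to\CC\to 0$ in $A\bimod A$ gives $\db A=\dh_{A\bimod A}A_+\le\dh_{A\bimod A}\CC$, so it would indeed suffice to produce a length-one projective bimodule resolution of $\CC$. The gap is that you never produce one. The entire content of the theorem sits in the step you defer to ``where the technical work concentrates'': there is no general mechanism by which a length-one projective resolution of the \emph{left} module $\lambda(\bar P)$, tensored up and ``twisted by $\rho$'', yields a length-one projective \emph{bimodule} resolution of $\CC$. One-sided projectivity of $P_0$ and $P_1$ does not ``promote'' to bimodule projectivity of $Q$ and $K$; if such a promotion were automatic, the hypothesis $\dh_A\lambda(\bar P)\le 1$ could be replaced by $\dh_A X\le 1$ for essentially arbitrary $X$, which cannot be right --- the specific module $\lambda(\bar P)$ must enter in an essential way, and your sketch gives no indication of how it does.

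For comparison, the paper's route is quite different. It invokes Helemskii's criterion: for a biprojective algebra $A$ and a left module $X$, $\dh_A X\le 1$ if and only if the diagonal map $A\Ptens X_\Pi\to(A_+\Ptens X_\Pi)\oplus(A\Ptens X)$ is a coretraction in $A\lmod$, where $X_\Pi=A\ptens{A}X$. For $X=\lambda(\bar P)$ one computes (Lemma~\ref{lemma:Pbar2}, via $\bar P\cdot P\sim P$) that $X_\Pi\cong A$ and that $\varkappa_X$ is the inclusion $\lambda(P)\hookrightarrow\lambda(\bar P)$. The coretraction then supplies maps $\varphi,\psi$ with $a\otimes b=a\cdot\varphi(b)+\psi(a\otimes b)$; evaluating on the basis vectors $e_i\otimes e_j$ and writing out the continuity estimates produces matrices $(\alpha_{ij})$, $(\beta_{ij})$ satisfying (\bfM). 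The conclusion $\db A\le 1$ then follows from the previously established equivalence, for biprojective $\lambda(P)$, of (\bfN)\&(\bfM) with $\db\lambda(P)\le 1$. Note that (\bfN) is genuinely used at this last step, whereas your sketch never engages with it. To salvage your approach you would have to either reprove that equivalence (i.e., build the bimodule splitting directly from concrete data such as $\alpha$ and $\beta$) or exhibit the bimodule presentation of $\CC$ explicitly; in either case a passage through some concrete coretraction datum like (\bfM) seems unavoidable, and that is precisely what is missing from your argument.
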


To prove Theorem \ref{thm:dhdb_bipr}, we need some preparation.
Let $A$ be a $\Ptens$-algebra, and let $X$ be a left $A$-$\Ptens$-module.
Following \cite{X1}, we set $X_\Pi=A\ptens{A} X$ and define
\begin{equation}
\label{kappa}
\varkappa_X\colon X_\Pi\to X,\quad
a\otimes x\mapsto a\cdot x\quad (a\in A,\; x\in X).
\end{equation}
Suppose now that $A$ is biprojective.
Then, by a result of Helemskii \cite[Section V.2]{X1},
$\dh_A X\le 1$ if and only if the ``diagonal'' map
\begin{equation}
\label{diag}
A\Ptens X_\Pi \to (A_+\Ptens X_\Pi)\oplus (A\Ptens X),\quad
a\otimes x \mapsto (a\otimes x,a\otimes\varkappa_X(x))
\end{equation}
is a coretraction in $A\lmod$.

To apply the above result to $A=\lambda(P)$ and $X=\lambda(\bar P)$, we first
have to describe $X_\Pi$ explicitly.

\begin{lemma}
\label{lemma:Pbar2}
Let $A=\lambda(P)$ be a K\"othe algebra satisfying {\upshape (\bfB)},
and let $X=\lambda(\bar P)$.
Then $X_\Pi$ is isomorphic to $A$ in $A\lmod$. Under this identification,
the canonical map $\varkappa_X\colon X_\Pi\to X$ becomes the identity embedding
of $\lambda(P)$ into $\lambda(\bar P)$.
\end{lemma}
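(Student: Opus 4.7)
\emph{Proof plan.} My strategy is to identify $X_\Pi=A\ptens{A} X$ with a concrete Köthe space in three steps: compute $A\Ptens X$ via the Pietsch isomorphism, factor out the module-tensor-product relations, and reduce the resulting Köthe set using (\bfB).

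First, by the two-Köthe-set analogue of \eqref{Kothe_ptens}, $A\Ptens X=\lambda(P)\Ptens\lambda(\bar P)$ is a Köthe space on $I\times I$ with basis $\{e_i\otimes e_j\}$ and seminorms $\|z\|_{p,\bar q}=\sum_{i,j}|z_{ij}|\,p_i\bar q_j$. Next I would identify the closed subspace $N\subset A\Ptens X$ generated by the relations $aa'\otimes x-a\otimes a'x$. Taking $a=e_i$ and $a'=x=e_j$ with $i\ne j$ yields $e_i\otimes e_j\in N$, so $N$ contains the closed off-diagonal subspace of $\lambda(P\cdot\bar P)$; conversely, every relation element has vanishing diagonal (both summands contribute $a_ia'_ix_i$ at position $(i,i)$), so $N$ lies in that subspace. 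Hence $N$ equals the off-diagonal subspace, and $(A\Ptens X)/N$ identifies with the Köthe space $\lambda(P\cdot\bar P)$ on $I$ obtained by restricting to the diagonal.

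The key step is to invoke (\bfB) to show $P\sim P\cdot\bar P$. The direction $P\cdot\bar P\prec P$ is immediate from $\bar q_i\le 1$. For $P\prec P\cdot\bar P$, given $p\in P$ use (\bfB) to pick $q\in P$ and $C_1>0$ with $p\le C_1q^2$, and enlarge $q$ via (P2) so that $q\ge p$ as well; then a case split on $q_i\ge 1$ versus $q_i<1$ yields $p_i\le\max(1,C_1)\,q_i\bar q_i$ in both cases, so $p\prec q\bar q\in P\cdot\bar P$. The diagonal embedding $\phi\colon A\to A\Ptens X$, $a\mapsto\sum_i a_i(e_i\otimes e_i)$ (continuous because $\bar q_i\le 1$), is a section of the ``diagonal restriction'' $\psi\colon z\mapsto\sum_i z_{ii}e_i$ (whose continuity uses $P\prec P\cdot\bar P$), so the quotient topology matches that of $\lambda(P)$ and no further completion is required. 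Since left multiplication by $A$ acts on the first tensor factor and $\psi$ preserves that factor, the induced map $\tilde\psi\colon X_\Pi\to A$ is an isomorphism of left $A$-modules.

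Finally, under this identification $e_i\otimes e_i\in X_\Pi$ corresponds to $e_i\in\lambda(P)$, while $\varkappa_X(e_i\otimes e_i)=e_i\cdot e_i=e_i\in\lambda(\bar P)$; hence $\varkappa_X$ becomes $e_i\mapsto e_i$, which is the continuous inclusion $\lambda(P)\hookrightarrow\lambda(\bar P)$ (continuous because $\bar p_i\le p_i$). The main obstacle is the identity $P\sim P\cdot\bar P$ under (\bfB); everything else is bookkeeping with Pietsch's tensor-product theorem and the defining relations of the module tensor product.
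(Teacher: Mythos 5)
Your argument is correct, and it arrives at the same crux as the paper --- the equivalence $P\cdot\bar P\sim P$ under (\bfB), proved by the identical case split on $q_i\ge 1$ versus $q_i<1$ after enlarging $q$ so that $p\le Cq^2$ and $p\le q$ --- but it reaches the identification $X_\Pi\cong\lambda(P\cdot\bar P)$ by a different and more self-contained route. The paper simply invokes \cite[Lemma 6.4]{Pir_msb}, which describes $X_\Pi$ for an arbitrary left $\Ptens$-module $X$ over a K\"othe algebra as $\bigl\{(x_i)\in\prod_i(e_iX):\sum_i\|x_i\|_{\bar p}\,q_i<\infty\ \forall p,q\bigr\}$, and then observes that $e_iX=\CC e_i$ to land on $\lambda(\bar P\cdot P)$. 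You instead unwind the definition of $A\ptens{A}X$ from scratch: you compute $A\Ptens X$ via Pietsch's theorem, show that the relation subspace $N$ is exactly the closed off-diagonal subspace (the generators give $e_i\otimes e_j\in N$ for $i\ne j$, and conversely every relation element has vanishing diagonal, so the two inclusions pin $N$ down), and then verify that the quotient is already complete --- hence equal to its completion $X_\Pi$ --- by exhibiting the continuous diagonal section $a\mapsto\sum_i a_i(e_i\otimes e_i)$ of the diagonal restriction. What your route buys is independence from the external lemma and an explicit justification that no further completion is needed, a point the citation absorbs silently; what the paper's citation buys is brevity and a formulation that applies uniformly to modules for which $e_iX$ is not one-dimensional. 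The identification of $\varkappa_X$ with the inclusion $\lambda(P)\hookrightarrow\lambda(\bar P)$ via its action on the basis vectors $e_i$, extended by linearity, density and continuity, is the same in both treatments. One cosmetic slip: the off-diagonal subspace you describe lives in $\lambda(P)\Ptens\lambda(\bar P)$ regarded as a K\"othe space on $I\times I$, not in $\lambda(P\cdot\bar P)$, which is the space on $I$ you obtain only after passing to the quotient.
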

\begin{proof}
By \cite[Lemma 6.4]{Pir_msb},
\[
X_\Pi\cong\Bigl\{ x=(x_i)\in\prod_i (e_i X) :
\| x\|_{p,q}=\sum_i \|x_i\|_{\bar p}\, q_i<\infty
\;\forall p,q\in P\Bigr\},
\]
and, under this identification, the canonical map $\varkappa_X$ takes each
$x=(x_i)\in X_\Pi$ to $\sum_i x_i\in X$. Since $e_i X=\CC e_i$, it follows that
\[
X_\Pi\cong\Bigl\{ a=(a_i)\in\CC^I :
\| a\|_{p,q}=\sum_i |a_i|\bar p_i q_i<\infty
\;\forall p,q\in P\Bigr\}=\lambda(\bar P\cdot P).
\]
Therefore we need only prove that $\bar P\cdot P\sim P$.
Since $P^{[2]}\prec P$ and $\bar P\prec P$, we have $\bar P\cdot P\prec P$.
For the converse, take any $p\in P$ and choose $C\ge 1$ and $q\in P$
such that $p\le Cq^2$ and $p\le q$. Fix any $i\in I$. If $q_i<1$, then $\bar q_i=q_i$,
and so $p_i\le C\bar q_i q_i$. If $q_i\ge 1$, then $\bar q_i=1$, and so
$p_i\le q_i\le Cq_i=C\bar q_i q_i$. Therefore $p\le C\bar q q$, so that
$P\prec\bar P\cdot P$, and, finally, $\bar P\cdot P\sim P$, as required.
\end{proof}

\begin{proof}[Proof of Theorem \ref{thm:dhdb_bipr}]
Set $X=\lambda(\bar P)$. Identifying $X_\Pi$ with $A$ by Lemma~\ref{lemma:Pbar2},
we see that the canonical map \eqref{diag} becomes
\begin{equation}
\label{diag2}
A\Ptens A \to (A_+\Ptens A)\oplus (A\Ptens X),\quad
a\otimes b \mapsto (a\otimes b,a\otimes b).
\end{equation}
Since $A$ is biprojective (see \cite[Theorem 3.5]{Pir_bipr})
and $\dh_A X\le 1$, it follows that
\eqref{diag2} is a coretraction in $A\lmod$.
Therefore there exists a continuous linear map
$\varphi\colon A\to A\Ptens A$ and an $A$-module morphism
$\psi\colon A\Ptens X\to A\Ptens A$ such that
\begin{equation}
\label{phi-psi}
a\otimes b=a\cdot\varphi(b)+\psi(a\otimes b)\quad (a,b\in A).
\end{equation}
Since $\varphi$ and $\psi$ are continuous, for each $p\in P$ there exist
$q\in Q$ and $C>0$ such that
\begin{align}
\label{phi_cont}
\|\varphi(a)\|_{p,p} &\le C\| a\|_q\quad (a\in A);\\
\label{psi_cont}
\|\psi(a\otimes x)\|_{p,p} &\le C\| a\|_q \| x\|_{\bar q} \quad (a\in A,\; x\in X).
\end{align}
Using the isomorphism $A\Ptens A\cong\lambda(P^{\times 2})$ (see \eqref{Kothe_ptens}),
we may represent each $\varphi(e_j)$ as
\begin{equation}
\label{phi_coeff}
\varphi(e_j)=\sum_{k,l}\lambda_{klj} e_k\otimes e_l\quad (j\in I).
\end{equation}
Then \eqref{phi_cont} implies that
\begin{equation}
\label{phi_cont_coeff}
\sum_{k,l} |\lambda_{klj}| p_k p_l \le Cq_j \quad (j\in I).
\end{equation}
Now fix any $i\in I$ and define
\[
\psi_i\colon X\to A\Ptens A,\quad \psi_i(x)=\psi(e_i\otimes x) \quad (x\in X).
\]
Since $\psi$ is an $A$-module morphism, we have
\[
e_j\cdot \psi_i(x)=\delta_{ji} \psi_i(x) \quad (i,j\in I),
\]
and so $\Im\psi_i\subset e_i\cdot A\Ptens A=\CC e_i\otimes A$.
Therefore for each $i\in I$ there exists a linear map $f_i\colon X\to A$
such that
\begin{equation}
\label{f_i_def}
\psi(e_i\otimes x)=\psi_i(x)=e_i\otimes f_i(x)\quad (x\in X).
\end{equation}
Setting $a=e_i$ and $b=e_j$ in \eqref{psi_cont}, we see that
\begin{equation}
\label{psi_cont_coeff_1}
\| f_i(e_j)\|_p\, p_i \le Cq_i \bar q_i \le Cq_i \quad (i,j\in I).
\end{equation}
Let
\begin{equation}
\label{f_coeff}
f_i(e_j)=\sum_{k} \mu_{kij} e_k \quad (i,j\in I).
\end{equation}
Then \eqref{psi_cont_coeff_1} is equivalent to
\begin{equation}
\label{psi_cont_coeff_2}
\sum_k |\mu_{kij}| p_k p_i \le Cq_i \quad (i,j\in I).
\end{equation}
Setting $a=e_i$ and $b=e_j$ in \eqref{phi-psi} and taking into account
\eqref{phi_coeff}, \eqref{f_i_def}, and \eqref{f_coeff}, we see that
\[
e_i\otimes e_j=e_i\cdot\sum_{k,l}\lambda_{klj} e_k\otimes e_l+e_i\otimes\sum_k \mu_{kij} e_k,
\]
which is equivalent to
\begin{equation}
\label{lambda_mu}
e_j=\sum_l \lambda_{ilj} e_l + \sum_k \mu_{kij} e_k = \sum_k (\lambda_{ikj}+\mu_{kij}) e_k
\quad (i,j\in I).
\end{equation}
Now set
\[
\alpha_{ij}=\lambda_{ijj},\quad \beta_{ij}=\mu_{jij} \quad (i,j\in I).
\]
Then \eqref{lambda_mu} implies that
\[
\alpha_{ij}+\beta_{ij}=1 \quad (i,j\in I),
\]
i.e., (\bfM1) holds. Next, \eqref{phi_cont_coeff} implies that
\[
|\alpha_{ij}| p_i p_j \le Cq_j \quad (i,j\in I),
\]
i.e., (\bfM2) holds. Finally, \eqref{psi_cont_coeff_2} implies that
\[
|\beta_{ij}| p_j p_i \le Cq_i \quad (i,j\in I),
\]
i.e., (\bfM3) holds. Thus $P$ satisfies (\bfM), and so $\db\lambda(P)\le 1$
by \cite[Theorem 4.7]{Pir_bipr}.
\end{proof}

Combining Theorem \ref{thm:dhdb_bipr} with our earlier results obtained in \cite{Pir_bipr}
and \cite{Pir_msb} yields a complete classification of biprojective K\"othe algebras
by their homological dimensions $\dg$ and $\db$. Before formulating the result,
let us recall some notation. Let $P$ be a K\"othe set on $I$. The K\"othe space
$\lambda^\infty(P)$ is defined by
\begin{equation*}
\lambda^\infty(P)=
\Bigl\{ a=(a_i)\in \CC^I :
\| a\|_p^\infty=\sup_i |a_i|p_i <\infty\quad\forall\, p\in P\Bigr\}\, .
\end{equation*}
This is a complete locally convex space
with the topology determined by
the family of seminorms $\{\|\cdot\|_p^\infty : p\in P\}$.
Clearly, $\lambda(P)\subset\lambda^\infty(P)$, and the embedding is continuous.
By the Grothendieck-Pietsch criterion, $\lambda(P)$ is nuclear if and only if
$\lambda(P)=\lambda^\infty(P)$ topologically, which is equivalent to condition (\bfN).
If $\lambda(P)$ is a $\Ptens$-algebra under pointwise multiplication
(i.e., if $P\prec P^{[2]}$), then so is $\lambda^\infty(P)$, and the algebra embedding
$\lambda(P)\subset\lambda^\infty(P)$ makes $\lambda^\infty(P)$ into a
$\lambda(P)$-$\Ptens$-module.

Given a $\Ptens$-algebra $A$, we consider $\CC$ as an $A$-$\Ptens$-module by letting
$A$ act on $\CC$ trivially. In other words, $\CC=A_+/A$.

\begin{theorem}
\label{thm:dg_bipr}
Let $A=\lambda(P)$ be a K\"othe algebra satisfying {\upshape (\bfB)}. Then
\[
\dg A=\db A=
\begin{cases}
0, & \text{$P$ satisfies {\upshape (\bfU)}}.\\
1, & \parbox[t]{95mm}{$P$ satisfies {\upshape (\bfN)} and {\upshape (\bfM)},
but does not satisfy {\upshape (\bfU)}. In this case, $\dh_A\CC=1$.}\\
2, & \parbox[t]{95mm}{$P$ satisfies {\upshape (\bfN)}, but does not satisfy
{\upshape (\bfM)}. In this case, $\dh_A\lambda(\bar P)=2$.}\\
2, & \parbox[t]{95mm}{$P$ does not satisfy {\upshape (\bfN)}. In this case,
$\dh_A\lambda^\infty(P)=2$.}
\end{cases}
\]
\end{theorem}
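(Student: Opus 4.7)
The plan is to treat each of the four cases listed on the right-hand side separately. The inequalities $\dh_A X\le\dg A\le\db A$ always hold, so in each case it suffices to bound $\db A$ from above by the claimed integer and to bound $\dh_A X$ from below by the same integer for the test module $X$ named in the statement.

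The first two cases amount to combining results already recalled in the preliminaries. If (\bfU) holds, then $\lambda(P)$ is contractible by \cite[Proposition~6.11]{Pir_msb}, so $\db A=0$. If (\bfB), (\bfN), and (\bfM) hold but (\bfU) fails, then \cite[Corollary~4.4 and Theorem~4.7]{Pir_bipr} give $\db A\le 1$, while failure of (\bfU) rules out contractibility and hence forces $\db A\ge 1$. To show $\dh_A\CC=1$ in the second case, I will use that projectivity of $\CC$ in $A\lmod$ is equivalent to (\bfU) (a splitting of the augmentation $A_+\to\CC$ would furnish a unit in $A$); failure of (\bfU) thus yields $\dh_A\CC\ge 1$, bounded above by $\dg A\le\db A=1$.

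The two remaining cases both use the general bound $\db A\le 2$ for biprojective $\Ptens$-algebras (which is applicable here by (\bfB), see \cite{X1}). In the case (\bfB), (\bfN), but not (\bfM), I will invoke Theorem~\ref{thm:dhdb_bipr} in contrapositive form together with \cite[Theorem~4.7]{Pir_bipr}: if $\dh_A\lambda(\bar P)\le 1$ held, then Theorem~\ref{thm:dhdb_bipr} would yield $\db A\le 1$, which by \cite[Theorem~4.7]{Pir_bipr} would in turn force (\bfM); since (\bfM) is assumed to fail, $\dh_A\lambda(\bar P)\ge 2$, and hence $\dg A=\db A=2$. In the case where (\bfN) fails, the lower bound is supplied by $\lambda^\infty(P)$: the equality $\dh_A\lambda^\infty(P)=2$ for biprojective non-nuclear K\"othe algebras was established in the author's earlier work \cite{Pir_bipr,Pir_msb}, and combined with the biprojective upper bound it closes the case.

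The main obstacle is the third case, (\bfB), (\bfN), not (\bfM): here Theorem~\ref{thm:dhdb_bipr} proved in this note is indispensable, since \cite[Theorem~4.7]{Pir_bipr} alone yields only the implication (\bfM)\,$\Rightarrow$\,$\db A\le 1$, whereas the lower bound $\dh_A\lambda(\bar P)\ge 2$ requires the sharper converse that $\dh_A\lambda(\bar P)\le 1$ already forces (\bfM). The other cases are largely bookkeeping: matching each combination of (\bfU), (\bfN), (\bfM) against the appropriate citation and choosing the correct test module ($\CC$, $\lambda(\bar P)$, or $\lambda^\infty(P)$) to realize the lower bound.
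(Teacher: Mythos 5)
Your proposal is correct and follows essentially the same route as the paper: the uniform upper bound $\db A\le 2$ from biprojectivity, then a four-way case analysis in which the first, second, and fourth cases are settled by the earlier results of \cite{Pir_bipr} and \cite{Pir_msb}, while the third case hinges on the contrapositive of Theorem~\ref{thm:dhdb_bipr} combined with the equivalence (\bfN)\&(\bfM) $\iff\db A\le 1$. You correctly identify that third case as the only point where the new theorem of this note is indispensable, which is exactly how the paper's (much terser) proof is organized.
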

\begin{proof}
By \cite[Theorem 3.5]{Pir_bipr}, condition (\bfB) is equivalent to $A$ being biprojective.
Hence (\bfB) implies that $\db A\le 2$ \cite[Proposition 2.5.8]{X_HOA},
and so $\dg A\le 2$.
The rest follows from Theorem~\ref{thm:dhdb_bipr} and \cite[Theorems 4.3 and 4.7]{Pir_bipr}
together with \cite[Proposition 6.11]{Pir_msb}.
\end{proof}

\section{Nonbiprojective K\"othe algebras}

In this section, we show that the homological dimensions $\dg$, $\db$, $\wdg$, and $\wdb$
of a nonbiprojective metrizable K\"othe algebra are infinite. First we need a lemma.

\begin{lemma}
\label{lemma:Kothe_pow}
Let $P$ be a K\"othe set. Suppose that $P^{[l]}\prec P^{[k]}$
for some $k,l\in\R$, $0<k<l$. Then $P^{[2]}\prec P$.
\end{lemma}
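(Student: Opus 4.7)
The plan is to reduce the hypothesis to the form $P^{[a]}\prec P$ for some $a>1$, iterate this to reach an exponent $\ge 2$, and then bridge the gap between the exponent $a^n$ and $2$ by a pointwise case analysis (using that elements of $P$ may take values both above and below $1$).

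First I would rewrite the hypothesis. Since $P^{[l]}\prec P^{[k]}$ means that for every $p\in P$ there exist $q\in P$ and $C>0$ with $p_i^l\le C q_i^k$, raising both sides to the power $1/k$ gives $p_i^{a}\le C^{1/k} q_i$ where $a=l/k>1$. Thus the hypothesis is equivalent to $P^{[a]}\prec P$ for some $a>1$.

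Next I would iterate. If $p\in P$, pick $q\in P$ and $C_0>0$ with $p^{a}\le C_0 q$. Applying the same statement to $q$ (and so on) and raising intermediate inequalities to appropriate powers, a straightforward induction yields, for every $n\ge 1$, some $q_n\in P$ and $K_n>0$ with
\[
p^{a^n}\le K_n q_n \qquad \text{pointwise on }I.
\]
Now choose $n$ so large that $a^n\ge 2$; this is possible because $a>1$.

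The crucial step is to pass from the bound $p^{a^n}\le K_n q_n$ to a bound on $p^{2}$. For indices $i$ with $p_i\ge 1$ we have $p_i^{2}\le p_i^{a^n}\le K_n (q_n)_i$, while for indices with $p_i<1$ we have the trivial bound $p_i^{2}\le p_i$. Using axiom (P2), choose $s\in P$ with $s\ge\max\{p,q_n\}$ pointwise. Then in both cases
\[
p_i^{2}\le p_i+K_n (q_n)_i\le (1+K_n)\, s_i,
\]
so $p^{2}\le (1+K_n)\, s$ with $s\in P$. This establishes $P^{[2]}\prec P$.

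The only real obstacle is the last step: one cannot simply drop from exponent $a^n$ to exponent $2$ by monotonicity, because Köthe set elements may take values smaller than $1$. The case split $p_i\ge 1$ vs.\ $p_i<1$, combined with (P2) to absorb the ``small'' case into a single majorant, resolves this cleanly. Everything else is a mechanical iteration of the hypothesis.
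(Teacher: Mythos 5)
Your proof is correct and follows essentially the same route as the paper's: reduce to $P^{[l/k]}\prec P$, iterate to an exponent $a^n\ge 2$, and handle the indices with $p_i<1$ by a pointwise case split. The only cosmetic difference is that the paper absorbs both cases into a single majorant $q$ with $p\le q$ and $C\ge 1$ chosen up front, whereas you add the two bounds and invoke (P2) at the end; these are interchangeable.
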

\begin{proof}
Set $r=l/k$. Then $P^{[r]}\prec P$, and, by induction, $P^{[r^n]}\prec P$ for every $n\in\N$.
Fix $n\in\N$ such that $\alpha=r^n\ge 2$. Then for each $p\in P$ there exist
$C\ge 1$ and $q\in P$ such that $p^\alpha\le Cq$ and $p\le q$.
Now fix any $i\in I$. If $p_i\ge 1$, then $p_i^2\le p_i^\alpha \le Cq_i$.
If $p_i<1$, then $p_i^2<p_i\le Cq_i$. Thus $p^2\le Cq$, which proves the claim.
\end{proof}

\begin{theorem}
\label{thm:Torne0}
Let $A=\lambda(P)$ be a metrizable K\"othe algebra not satisfying {\upshape (\bfB)}.
Then for each odd $n\in\N$ we have $\Tor_n^A(\CC,\CC)\ne 0$. Moreover, the latter
space is not Hausdorff. As a corollary,
$\dg A=\db A=\wdg A=\wdb A=\wdh_A\CC=\infty$.
\end{theorem}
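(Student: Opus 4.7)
The plan is to compute $\Tor_n^A(\CC,\CC)$ via the bar complex and, for each odd $n$, exhibit an explicit non-Hausdorff direct summand of the $n$-th homology. Starting from a bar resolution of $\CC$ over $A$ and passing to the $\ptens{A}$-complex, $\Tor_n^A(\CC,\CC)$ is the $n$-th homology of the complex $C_\bullet$ with $C_n=A^{\Ptens n}\cong\lambda(P^{\times n})$ (via \eqref{Kothe_ptens}) and reduced Hochschild differential
\[
\bar d_n(a_1\otimes\cdots\otimes a_n)=\sum_{k=1}^{n-1}(-1)^{k-1}a_1\otimes\cdots\otimes a_ka_{k+1}\otimes\cdots\otimes a_n.
\]
Because $e_ie_j=\delta_{ij}e_i$, the differential is combinatorial on the canonical basis, and in particular $\bar d_n(e_i^{\otimes n})$ equals $e_i^{\otimes(n-1)}$ when $n$ is even and $0$ when $n$ is odd.

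Next I would isolate the ``diagonal'' part of $C_\bullet$. Put $D_n=\lambda(P^{[n]})$ and introduce the continuous maps
\[
\delta_n\colon D_n\to C_n,\quad c\mapsto\sum_i c_i e_i^{\otimes n},\qquad\pi_n\colon C_n\to D_n,\quad z\mapsto(z_{i,\ldots,i})_i.
\]
The identities $\|\delta_n(c)\|_{(p,\ldots,p)}=\|c\|_{p^n}$ and $\|\pi_n(z)\|_{p^n}\le\|z\|_{(p,\ldots,p)}$ give continuity, and $\pi_n\delta_n=\id_{D_n}$ is immediate. A check on basis tensors---using the key observation that $\pi_{n-1}\bar d_n$ annihilates every non-diagonal basis element, since producing a diagonal output forces all indices $i_1,\ldots,i_n$ to be equal---shows that $\delta_\bullet$ and $\pi_\bullet$ are chain maps between $C_\bullet$ and the complex $D_\bullet$ whose differential is the natural inclusion $\iota\colon\lambda(P^{[n]})\hookrightarrow\lambda(P^{[n-1]})$ for $n$ even and $0$ for $n$ odd. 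Since $\pi_\bullet\delta_\bullet=\id$, $D_\bullet$ sits as a topological direct summand of $C_\bullet$, and therefore for each odd $n$ the quotient
\[
H_n(D_\bullet)=\lambda(P^{[n]})/\lambda(P^{[n+1]})
\]
is a topological retract of $\Tor_n^A(\CC,\CC)$.

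It remains to show this quotient is nonzero and non-Hausdorff. Failure of $(\bfB)$ together with the contrapositive of Lemma~\ref{lemma:Kothe_pow}, applied with $k=n$ and $l=n+1$, yields $P^{[n+1]}\not\prec P^{[n]}$, hence $\lambda(P^{[n+1]})\subsetneq\lambda(P^{[n]})$. On the other hand, the linear span of $\{e_i:i\in I\}$ already lies in $\lambda(P^{[n+1]})$ and is dense in the Fr\'echet space $\lambda(P^{[n]})$, so $\lambda(P^{[n+1]})$ is a proper dense subspace and the quotient is nonzero and non-Hausdorff. Because a Hausdorff topological vector space cannot admit a non-Hausdorff topological retract, $\Tor_n^A(\CC,\CC)$ inherits both properties.

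The dimension statement is then bookkeeping: non-vanishing and non-Hausdorffness of $\Tor_n^A(\CC,\CC)$ for arbitrarily large $n$ forces $\wdh_A\CC=\infty$ by the Tor-characterization of $\wdh$ recalled in the preliminaries, after which the inequalities $\wdh_A\CC\le\wdg A\le\min\{\wdb A,\dg A\}\le\db A$ force all four dimensions to be infinite. The delicate point of the whole argument is verifying that $\delta_\bullet$ and $\pi_\bullet$ really are chain maps splitting off the diagonal as a topological summand---a Kronecker-delta bookkeeping that rests on the identity $\bar d_n(e_i^{\otimes n})=\bigl(\sum_{k=1}^{n-1}(-1)^{k-1}\bigr)e_i^{\otimes(n-1)}$ and on the observation that $\bar d_n$ applied to a non-diagonal tensor still produces outputs with at least two distinct indices; once this is in place, Lemma~\ref{lemma:Kothe_pow} and the density of finitely supported sequences do the remaining work.
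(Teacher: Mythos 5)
Your argument is correct, and it takes a genuinely different route from the paper's proof, although both hinge on the same two ingredients: the diagonal basis vectors $e_i^{\otimes n}$, on which the differential acts by $0$ or $1$ according to parity, and Lemma~\ref{lemma:Kothe_pow}. The paper argues by contradiction: assuming $\Tor_n^A(\CC,\CC)$ is Hausdorff for odd $n$, it applies the open mapping theorem to $d\colon\lambda(P^{\times(n+1)})\to\lambda(P^{\times n})$ to produce a bounded choice of preimages, tests this on $y=e_i^{\otimes n}$, and uses the invariance $d(E^i_{n+1})\subset E^i_n$ of the off-diagonal subspaces to force the coefficient of $e_i^{\otimes(n+1)}$ in any preimage to equal $1$; the resulting estimate $p_i^{n+1}\le Cq_i^n$ gives $P^{[n+1]}\prec P^{[n]}$ and hence (\bfB). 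You instead split the diagonal subcomplex off as a chain retract via $\delta_\bullet$ and $\pi_\bullet$ and identify its odd-degree homology as $\lambda(P^{[n]})/\lambda(P^{[n+1]})$, a quotient by a proper dense subspace. Your route costs a verification of the chain-map identities (which does go through: a term of $\bar d_n(e_{i_1\ldots i_n})$ can only be diagonal if all of $i_1,\ldots,i_n$ coincide), but it buys more than the paper proves, namely an explicit nonzero non-Hausdorff topological direct summand of $\Tor_n^A(\CC,\CC)$ rather than bare non-Hausdorffness. Two small points deserve to be made explicit. First, for the inclusion $\iota\colon\lambda(P^{[n]})\hookrightarrow\lambda(P^{[n-1]})$ to exist at all you need $P^{[n-1]}\prec P^{[n]}$; this follows from $P\prec P^{[2]}$ by choosing, for a given $p\in P$, some $q\in P$ and $C\ge 1$ with $p\le Cq^2$ and $p\le q$, whence $p^{n-1}=p\cdot p^{n-2}\le Cq^2\cdot q^{n-2}=Cq^n$. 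Second, passing from $P^{[n+1]}\not\prec P^{[n]}$ to the \emph{properness} of the set inclusion $\lambda(P^{[n+1]})\subset\lambda(P^{[n]})$ uses that a set-theoretic inclusion of metrizable K\"othe spaces is automatically continuous (closed graph theorem); this is precisely where metrizability enters your argument, just as it enters the paper's through the open mapping theorem.
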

\begin{proof}
By \cite[2.3.3]{X_HOA},
the spaces $\Tor_n^A(\CC,\CC)$ are the homology of the chain complex
\[
0 \lar\CC\xla{0} A \lar A\Ptens A \lar \cdots \lar A^{\Ptens n}
\xla{d} A^{\Ptens (n+1)} \lar \cdots,
\]
the differential being given by
\[
\begin{split}
d(a_0\otimes\cdots\otimes a_n)
&=a_0 a_1 \otimes \cdots \otimes a_n\\
&+\sum_{k=1}^{n-2} (-1)^k a_0\otimes \cdots\otimes a_k a_{k+1}\otimes\cdots\otimes a_n\\
&+(-1)^{n-1} a_0\otimes\cdots\otimes a_{n-1} a_n.
\end{split}
\]
Identifying $A^{\Ptens n}$ with $\lambda(P^{\times n})$ (see \eqref{Kothe_ptens}),
we see that for each $n$
the differential $d\colon\lambda(P^{\times (n+1)})\to\lambda(P^{\times n})$
acts by the formula
\begin{equation}
\label{d_Kothe}
d(e_{i_0\ldots i_n})
=\delta_{i_0 i_1} e_{i_1\ldots i_n}
+\sum_{k=1}^{n-2} (-1)^k \delta_{i_k i_{k+1}} e_{i_0\ldots \hat i_{k} \ldots i_n}
+(-1)^{n-1} \delta_{i_{n-1} i_n} e_{i_0\ldots i_{n-2} i_n},
\end{equation}
where, as usual, the notation $\hat i_k$ indicates that $i_k$ is omitted.
For notational convenience, set $e_i^n=e_{i\ldots i}$ with the subscript ``$i$''
repeated $n$ times. Then \eqref{d_Kothe} implies that
\begin{equation}
\label{d_e_i}
d(e_i^{n+1})=\left\{
\begin{array}{ll}
e_i^n, & n \text{ is odd},\\
0, & n \text{ is even}.
\end{array}
\right.
\end{equation}
For each $n\in\N$ and each $i\in I$ set
\[
E_n^i=\left\{\sum\alpha_{k_1\ldots k_n} e_{k_1\ldots k_n}\in\lambda(P^{\times n}) :
\alpha_{i\ldots i}=0\right\}.
\]
It follows from \eqref{d_Kothe} that $d(e_{k_0\ldots k_n})\in E_n^i$ unless
$(k_0,\ldots ,k_n)=(i,\ldots ,i)$. Therefore
\begin{equation}
\label{E_incl}
d(E^i_{n+1})\subset E^i_n.
\end{equation}

Suppose that $n$ is odd, and assume, towards a contradiction, that $\Tor^A_n(\CC,\CC)$
is Hausdorff. This is equivalent to say that the image of
$d\colon\lambda(P^{\times (n+1)})\to\lambda(P^{\times n})$ is closed.
By the Open Mapping Theorem, $d$ is an open map onto its image.
Therefore for each $p\in P$ there exist $q\in P$ and $C>0$ such that
for each $y\in\Im d$ there exists $x\in d^{-1}(y)$ satisfying $\| x\|_p\le C\| y\|_q$.

Now fix any $i\in I$, set $y=e_i^n$ (which belongs to $\Im d$ by \eqref{d_e_i}),
and find $x\in d^{-1}(y)$ as above. Since
$\lambda(P^{\times (n+1)})=\CC e_i^{n+1}\oplus E^i_{n+1}$, we may decompose $x$
as $x=\alpha e_i^{n+1}+z$, where $z\in E^i_{n+1}$ and $\alpha\in\CC$.
Applying $d$ and using \eqref{d_e_i}
and \eqref{E_incl}, we see that $\alpha=1$. Therefore
\[
p_i^{n+1}\le \| x\|_p \le C\| y\|_q=Cq_i^n \quad (i\in I),
\]
and so $P^{[n+1]}\prec P^{[n]}$. By Lemma~\ref{lemma:Kothe_pow},
this implies that $P^{[2]}\prec P$,
and, finally, $P^{[2]}\sim P$, i.e., (\bfB) holds. The resulting contradiction shows that
$\Tor^A_n(\CC,\CC)$ is not Hausdorff, as required.
\end{proof}

The next theorem summarizes what we know about homological dimensions of
metrizable K\"othe algebras.

\begin{theorem}
\label{thm:wdgdg}
Let $A=\lambda(P)$ be a metrizable K\"othe algebra. Then
\begin{align}
\label{wdg}
\wdg A=\wdb A&=
\begin{cases}
0, & \text{$P$ satisfies {\upshape (\bfU)}}.\\
1, & \parbox[t]{78mm}{$P$ satisfies {\upshape (\bfB)} and {\upshape (\bfN)},
but does not satisfy {\upshape (\bfU)}. In this case, $\wdh_A\CC=1.$}\\
2, & \parbox[t]{78mm}{$P$ satisfies {\upshape (\bfB)},
but does not satisfy {\upshape (\bfN)}. In this case, $\wdh_A\lambda^\infty(P)=2$.}\\
\infty, & \parbox[t]{78mm}{$P$ does not satisfy {\upshape (\bfB)}. In this case,
$\wdh_A\CC=\infty$.}
\end{cases}
\\
\label{dg}
\dg A=\db A&=
\begin{cases}
0, & \text{$P$ satisfies {\upshape (\bfU)}}.\\
1, & \parbox[t]{78mm}{$P$ satisfies {\upshape (\bfB)}, {\upshape (\bfN)},
and {\upshape (\bfM)}, but does not satisfy {\upshape (\bfU)}. In this case, $\dh_A\CC=1.$}\\
2, & \parbox[t]{78mm}{$P$ satisfies {\upshape (\bfB)} and {\upshape (\bfN)},
but does not satisfy {\upshape (\bfM)}. In this case, $\dh_A\lambda(\bar P)=2$.}\\
2, & \parbox[t]{78mm}{$P$ satisfies {\upshape (\bfB)},
but does not satisfy {\upshape (\bfN)}. In this case, $\dh_A\lambda^\infty(P)=2$.}\\
\infty, & \parbox[t]{78mm}{$P$ does not satisfy {\upshape (\bfB)}. In this case,
$\dh_A\CC=\infty$.}
\end{cases}
\end{align}
\end{theorem}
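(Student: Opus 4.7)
The plan is to read Theorem~\ref{thm:wdgdg} as a summary statement: each row of each formula has already been proved, either in Theorems~\ref{thm:dg_bipr} and~\ref{thm:Torne0} above or in the companion papers \cite{Pir_bipr,Pir_msb}. I would split on whether $P$ satisfies (\bfB).

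If (\bfB) fails, Theorem~\ref{thm:Torne0} does all the work at once: it gives $\wdh_A\CC=\infty$ together with the chain $\wdh_A\CC\le\wdg A\le\wdb A$ and $\wdh_A\CC\le\wdg A\le\dg A\le\db A$, forcing all four dimensions to be infinite. This handles the last row of \eqref{wdg} and \eqref{dg} simultaneously.

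Assume now that (\bfB) holds. Formula \eqref{dg} is then exactly Theorem~\ref{thm:dg_bipr}, so only \eqref{wdg} requires attention. By \cite[Theorem 5.2]{Pir_msb} the metrizable Köthe algebra $\lambda(P)$ is biflat, so the flat counterpart of \cite[Proposition 2.5.8]{X_HOA} (see \cite{Sel_bifl}) yields $\wdb A\le 2$, which is the third row. If (\bfU) holds, then $A$ is contractible by \cite[Proposition 6.11]{Pir_msb} and hence amenable, giving $\wdb A=0$; this is the first row. For the second row, under (\bfB) and (\bfN) the same kind of argument as in \cite[Corollary 4.4]{Pir_bipr}, but phrased in terms of flatness instead of projectivity, produces $\wdb A\le 1$; no analogue of (\bfM) is needed, because only a flat (not a projective) splitting of the diagonal sequence is required. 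The matching lower bounds $\wdh_A\CC\ge 1$ (when (\bfU) fails) and $\wdh_A\lambda^\infty(P)\ge 2$ (when (\bfN) fails) are the flat versions of the corresponding statements in Theorem~\ref{thm:dg_bipr}, and appear in \cite{Pir_msb}; in particular $\wdh_A\CC\ge 1$ follows from the fact that a non-unital biflat algebra cannot have $\CC$ as a flat module, as one sees by testing flatness against $\CC$ itself in the bar complex.

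The main obstacle is purely organizational: no new homological computation is performed in this proof, everything being absorbed into the preceding two theorems of the present paper together with the results of \cite{Pir_bipr,Pir_msb}. The one place where one must be careful is to verify that the flat versions of \cite[Theorem 4.3]{Pir_bipr} and \cite[Corollary 4.4]{Pir_bipr} — the weak upper bounds $\wdh_A\lambda^\infty(P)\le 2$ and $\wdb A\le 1$ under (\bfN) — are indeed available in \cite{Pir_msb}; modulo this reference check, the theorem reduces to case-by-case bookkeeping across the four surviving rows of \eqref{wdg} and the five of \eqref{dg}.
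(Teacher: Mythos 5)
Your proposal is correct and follows essentially the same route as the paper: the case where (\bfU{}) fails (\bfB{}) is handled by Theorem~\ref{thm:Torne0}, formula \eqref{dg} under (\bfB{}) is exactly Theorem~\ref{thm:dg_bipr}, and the rows of \eqref{wdg} are assembled from \cite[Theorems 5.2 and 6.10, Proposition 6.11]{Pir_msb}, just as you indicate. The only cosmetic difference is that the paper gets $\wdg A,\wdb A\le 2$ directly from $\db A\le 2$ via the inequalities $\wdb A\le\db A$ and $\wdg A\le\dg A$, rather than from a flat analogue of \cite[Proposition 2.5.8]{X_HOA}; otherwise your reduction to bookkeeping over prior results is precisely what the paper does.
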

\begin{proof}
As was already mentioned (see the proof of Theorem~\ref{thm:dg_bipr}),
condition (\bfB) implies that $\db A\le 2$.
Hence all the dimensions $\dg A$, $\wdg A$, and $\wdb A$ are $\le 2$.
Now \eqref{wdg} follows from
Theorem~\ref{thm:Torne0} and from \cite[Theorems 5.2 and 6.10, Proposition 6.11]{Pir_msb},
while \eqref{dg} follows from Theorems~\ref{thm:dg_bipr} and~\ref{thm:Torne0}.
\end{proof}

\begin{remark}
\label{rem:dg_Fr_Ptens}
It easily follows from Theorem~\ref{thm:wdgdg} that for a metrizable K\"othe
algebra $A=\lambda(P)$ the global dimension of $A$ does not depend on whether
we consider $A$ as a Fr\'echet algebra or as a $\Ptens$-algebra.
\end{remark}

\section{Examples}

In this section we compute homological dimensions of the K\"othe algebras
discussed in Examples~\ref{example:l^1}--\ref{example:H(D)}.
We will see, in particular, that every combination
of (\bfU), (\bfN), (\bfB), (\bfM) described in \eqref{wdg} and \eqref{dg}
is possible.

\begin{example}
The algebra $\ell^1(I)$ satisfies {\upshape (\bfB)},
but does not satisfy {\upshape (\bfN)}. Therefore for each
$d\in\{ \dg,\db,\wdg,\wdb\}$ we have $d(\ell^1(I))=2$; moreover,
we have $\dh_{\ell^1(I)} \ell^\infty(I)=\wdh_{\ell^1(I)}\ell^\infty(I)=2$.
For $\dg$, $\db$, and $\dh_{\ell^1(I)}\ell^\infty(I)$, this is an old
result by Helemskii \cite{X_ne1} (see also \cite[V.2.16]{X1});
for $\wdb$, the result is due to Selivanov \cite{Sel_bifl}.
\end{example}

\begin{example}
The algebra $\CC^I$ satisfies {\upshape (\bfU)}, and so $\dg\CC^I=\db\CC^I=0$.
If $\CC^I$ is metrizable (i.e., if $I$ is at most countable), then
$\wdg\CC^I=\wdb\CC^I=0$. These results are due to Helemskii~\cite[Theorem IV.5.27]{X1}.
Moreover, he proved [loc. cit.] that each commutative Arens-Michael algebra
$A$ with $\dg A=0$ is topologically isomorphic to $\CC^I$ for some $I$.
\end{example}

\begin{example}
The algebra $\Lambda_R(\alpha)$ (see Example~\ref{example:Lambda})
satisfies {\upshape (\bfB)} if and only if either $R=1$ or $R=\infty$
\cite[Example 3.5]{Pir_bipr}. Therefore for each $1<R<\infty$
and each $d\in\{ \dg,\db,\wdg,\wdb\}$ we have
$d(\Lambda_R(\alpha))=\dh_{\Lambda_R(\alpha)}\CC=\wdh_{\Lambda_R(\alpha)}\CC=\infty$.
In particular, $d(\cH(\DD_R))=\infty$ whenever $1<R<\infty$.
\end{example}

Before giving further examples, we would like to note that condition {\upshape (\bfM)}
is satisfied automatically for many natural K\"othe spaces.
In particular, if $I=\N$, and if $p_i\le p_{i+1}$ for each $p\in P$
and each $i\in\N$, then {\upshape (\bfM)} follows from {\upshape (\bfB)}
\cite[Corollary 7.5]{Pir_msb}.

\begin{example}
The algebra $\Lambda_\infty(\alpha)$ satisfies {\upshape (\bfB)}
and hence {\upshape (\bfM)} (see above).
Clearly, $\Lambda_\infty(\alpha)$ does not satisfy {\upshape (\bfU)}.
The Grothendieck-Pietsch criterion implies that $\Lambda_\infty(\alpha)$ satisfies
{\upshape (\bfN)} if and only if $\sup_n (\log n)/\alpha_n<\infty$
(see, e.g., \cite[29.6 and 28.16]{MV}). Therefore
for each $d\in\{ \dg,\db,\wdg,\wdb\}$ we have
\[
d(\Lambda_\infty(\alpha))=
\begin{cases}
1 & \text{if $\sup_n (\log n)/\alpha_n<\infty$}\\
2 & \text{otherwise}.
\end{cases}
\]
In particular, $d(s)=d(\cH(\CC))=1$.
\end{example}

\begin{example}
The algebra $\Lambda_1(\alpha)$ satisfies {\upshape (\bfB)}.
The Grothendieck-Pietsch criterion implies that $\Lambda_1(\alpha)$ satisfies
{\upshape (\bfN)} if and only if $\lim_n (\log n)/\alpha_n=0$
(see, e.g., \cite[29.6 and 28.16]{MV}). However, the latter condition
implies that $\Lambda_1(\alpha)$ satisfies {\upshape (\bfU)}.
Therefore
for each $d\in\{ \dg,\db,\wdg,\wdb\}$ we have
\[
d(\Lambda_1(\alpha))=
\begin{cases}
0 & \text{if $\lim_n (\log n)/\alpha_n=0$}\\
2 & \text{otherwise}.
\end{cases}
\]
In particular, $d(\cH(\DD_1))=0$.
\end{example}

\begin{example}
Let $I=\N\times\N$. For each $i,j,k\in\N$ we define
\[
p_{ij}^{(k)}=
\begin{cases}
2^{(kj)^i} (i+j)^k & (i\le k),\\
(i+j)^k & (i>k).
\end{cases}
\]
Set $p^{(k)}=(p_{ij}^{(k)})_{i,j\in\N}$, and consider the K\"othe set
$P=\{ p^{(k)}\}_{k\in\N}$. As was shown in \cite[Theorem 7.9]{Pir_msb},
$P$ satisfies {\upshape (\bfB)} and {\upshape (\bfN)},
but does not satisfy {\upshape (\bfM)}.
Note that, since $p_{ij}^{(k)}\ge 1$ for all $i,j,k$, we have $\lambda(\bar P)=\ell^1$.
Therefore
$\dg\lambda(P)=\db\lambda(P)=\dh_{\lambda(P)}\ell^1=2$,
while $\wdg\lambda(P)=\wdb\lambda(P)=\wdh_{\lambda(P)}\CC=1$.
\end{example}

\section{Remarks on quasibiprojectivity}

It is interesting to compare Theorem~\ref{thm:Torne0} with recent results of
Selivanov~\cite{Sel_inf} on quasibiprojective Banach algebras.
By definition [loc. cit.], a $\Ptens$-algebra $A$ is {\em quasibiprojective}
if $A=\overline{A^2}$ and the map
\[
\pi_A\colon A\Ptens A\to A\ptens{A} A,\quad a\otimes b\mapsto a\tens{A} b,
\]
is a retraction in $A\bimod A$. The latter condition means that there
exists an $A$-$\Ptens$-bimodule morphism
$\rho_A\colon A\ptens{A} A\to A\Ptens A$ such that $\pi_A\rho_A=\id$.
Each quasibiprojective algebra is biprojective, but the converse is false.
For example, Selivanov proved that all sequence algebras $\ell^p\; (1\le p<\infty)$ are
quasibiprojective, but are not biprojective unless $p=1$.
A similar assertion holds for the convolution algebras $L^p(G)$, where
$G$ is an infinite compact group.
The Schatten ideals $S^p(H)$ (where $H$ is an infinite-dimensional Hilbert space)
are quasibiprojective whenever $1\le p\le 2$, but are not biprojective unless $p=1$.
More examples of quasibiprojective Banach algebras can be found
in~\cite[Theorem 3.16]{Sel_inf}. As for general $\Ptens$-algebras, it is easy to show
(by using \cite[Lemma 5.1]{Pir_msb}) that all K\"othe algebras $\lambda(P)$
are quasibiprojective.

Selivanov proved that, if $A$ is a quasibiprojective, non-biprojective Banach algebra,
then $\dg A=\db A=\infty$ (see \cite[Theorem 3.14]{Sel_inf}).
In fact, an easy modification of his argument shows that $\wdg A=\wdb A=\infty$ as well
(Selivanov, private communication). It is natural to ask whether or not
Selivanov's theorem can be extended to Fr\'echet
algebras. If yes, then the last statement of our Theorem~\ref{thm:Torne0}
(except for the equality $\wdh_A\CC=\infty$) would be
an easy consequence of this general result. However, we do not know whether this
general result is true. The difficulty is that Selivanov's argument heavily
relies on some geometric properties of Banach spaces which do not hold for
nonnormable Fr\'echet spaces.

\end{document}